\newtheorem{theorem}{Theorem}
\newtheorem{lemma}{Lemma}
\newtheorem{proposition}{Proposition}
\begin{document}

\title[]{Small Gaps in the Ulam sequence}

\author[]{Fran\c{c}ois Cl\'ement}
\author[]{Stefan Steinerberger}
\address{Department of Mathematics, University of Washington, Seattle}
\email{fclement@uw.edu} 
\email{steinerb@uw.edu}

\begin{abstract}
The Ulam sequence, described by Stanislaw Ulam in the 1960s, starts $1,2$ and then iteratively adds the smallest integer that can be uniquely written as the sum of two distinct earlier terms: this gives $1,2,3,4,6,8,11,\dots$. Already in 1972 the great French poet Raymond Queneau wrote that it `gives an impression of great irregularity'. This irregularity appears to have a lot of structure which has inspired a great deal of work; nonetheless, very little is rigorously proven. We improve the best upper bound on its growth and show that at least \textit{some} small gaps have to exist: for some $c>0$ and all $n \in \mathbb{N}$
$$ \min_{1 \leq k \leq n} \frac{a_{k+1}}{a_k} \leq 1 + c\frac{\log{n}}{n}.$$
\end{abstract}

 \maketitle

\section{Introduction and Results}
\subsection{Introduction.}
The origins of the Ulam sequence are mysterious. It is mentioned in a 1962 paper of Ulam \cite{ulam0}. Two years later, Ulam \cite{ulam} writes
\begin{quote}
 One can consider a rule for growth of patterns  -- in one dimension it would be
merely a rule for obtaining successive integers. [...] Another one-dimensional sequence was studied recently. This
one was defined purely additively: one starts, say, with integers 1, 2 and considers in turn all integers which are obtainable as the sum of two different ones
previously defined but only if they are so expressible in a unique way. The
sequence would start as follows:
$$1,2,3,4,6,8,11,13,\dots$$
\end{quote}
No additional motivation is given and it is not at all clear why the arising sequence
should be of interest. A few years later, Raymond Queneau, one of the most famous French poets of the twentieth century (and a co-founder of Oulipo), published the article \textit{Sur les suites s-additives} \cite{q0} (CRAS, 1968) followed by a more extensive article with the same name \cite{q} (J. Comb. Theory, 1972). Queneau considers $s-$additive sequences, sequences of integers where each new term is the sum of exactly $s$ previous terms. He shows that these can exhibit very interesting behavior. For $s=1$, he remarks that we get the sequence `studied by S. M. Ulam. It gives the impression of great irregularity' \cite{q}. The Ulam sequence and, more generally, $s-$additive sequences were then studied by  Cassaigne-Finch \cite{cas}, Finch \cite{finch1, finch3, finch2, finch0}, Mauldin-Ulam \cite{mauldin}, Recaman \cite{recaman}, Schmerl-Spiegel \cite{schm} and others.
Around ten years ago, one of the authors discovered (numerically) that the `great irregularity' already observed by Queneau seems to exhibit a great degree of regularity: there seems to exist a real number $\alpha \sim 2.57145\dots$ so that the sequence $\alpha a_n \mod 2\pi$ exhibits a most curious distribution \cite{stein}. This inspired subsequent work \cite{adut,angelo, bade, hinman, hinman2, kravitz, kuca, mandel, sen} but remains unexplained to this day.

\subsection{Results} Despite all of this, \textit{very} little is actually rigorously known about the Ulam sequence. It is known that the sequence is infinite: if it were finite, $a_1, \dots, a_n$, then $a_{n-1} + a_n$ has a unique representation as the sum of two distinct earlier terms. This proves $a_{n+1} \leq a_n + a_{n-1}$ which can be used to show that $a_n \leq F_n$ where $F_n$ is the $n-$th Fibonacci number. A further refinement is possible.

\begin{proposition}[R. B. Eggleton, \cite{recaman}] We have
$$ a_{n+1} \leq a_n + a_{n-2}.$$
This implies $a_n \leq 1.466^n$ where $1.46\dots$ is the root of $x^3 - x^2 - 1=0$.
\end{proposition}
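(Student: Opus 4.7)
The natural strategy is to prove directly that the integer $N = a_n + a_{n-2}$ admits a unique representation as a sum of two distinct terms from $\{a_1,\ldots,a_n\}$. Since $N > a_n$ and the next Ulam term is, by definition, the smallest integer exceeding $a_n$ with this property, such a uniqueness result would immediately yield $a_{n+1} \leq a_n + a_{n-2}$.

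The obvious representation is $N = a_{n-2} + a_n$; suppose $N = a_i + a_j$ is another, with $i < j \leq n$. I would split on the value of $j$. If $j \leq n-2$, then by monotonicity $a_i + a_j \leq a_{n-3} + a_{n-2} < a_n + a_{n-2} = N$, a contradiction. If $j = n$, then $a_i = a_{n-2}$ forces $i = n-2$, so the representation coincides with the trivial one. The only subtle case is $j = n-1$, where $a_i = a_n + a_{n-2} - a_{n-1}$; because the sequence is strictly increasing, $a_n - a_{n-1} \geq 1$, whence $a_i > a_{n-2}$. But $i < n-1$ forces $a_i \leq a_{n-2}$, yielding the desired contradiction.

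Given $a_{n+1} \leq a_n + a_{n-2}$, the exponential bound is routine: if $(b_n)$ satisfies $b_{n+1} = b_n + b_{n-2}$ and dominates $(a_n)$ on an initial segment, then $a_n \leq b_n$ throughout, and $(b_n)$ grows like $\lambda^n$ where $\lambda \approx 1.4656$ is the dominant real root of $x^3 - x^2 - 1 = 0$. The only genuinely delicate step in the whole argument is ruling out $j = n-1$ in the case analysis, and the strict monotonicity of $(a_n)$ is exactly the right tool to close it.
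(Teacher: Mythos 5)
Your proof is correct and follows essentially the same strategy as the paper: show $N=a_n+a_{n-2}$ has a unique representation by casing on the larger index $j$, with the $j=n-1$ case handled by strict monotonicity. The paper's version is terser (and in fact elides the $j=n-2$ sub-case, which you handle cleanly inside $j\le n-2$), but the underlying argument is identical, as is the routine reduction of the exponential bound to the dominant root of $x^3-x^2-1$.
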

\begin{proof} The number $a_n + a_{n-2}$ can be written as the sum of two distinct earlier terms. It remains to show that there is no other representation. If $a_{n-2} + a_n = a_i + a_j$ with $1 \leq i < j \leq n$, then $j$ cannot be $n$ and $j$ cannot be smaller than $n-2$. If $j=n-1$, then $i \leq n-2$ which is a contradiction.
\end{proof}
To the best of our knowledge, this is the only rigorously proven result about the growth of the Ulam sequence. Our first contribution is an improvement.
\begin{theorem}
    We have, for all $n$ sufficiently large,
    $$ a_n \leq 1.454^n.$$
\end{theorem}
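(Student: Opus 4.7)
The plan is to sharpen Eggleton's bound $a_{n+1} \leq a_n + a_{n-2}$ by studying the strictly smaller candidate $a_n + a_{n-3}$. If this candidate has a unique representation as a sum of two distinct earlier Ulam numbers, one immediately obtains $a_{n+1} \leq a_n + a_{n-3}$; combining this sharpened bound, at the indices where it applies, with Eggleton's original bound at the remaining indices yields a mixed recurrence whose exponential growth rate I would show falls below $1.454$.

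First I would enumerate the obstructions to unique representability of $a_n + a_{n-3}$. Writing $a_n + a_{n-3} = a_i + a_j$ with $1 \leq i < j \leq n$ and $(i,j) \neq (n-3, n)$, the case $j \leq n-3$ is impossible because it would force $a_i + a_j \leq 2a_{n-3} < a_n + a_{n-3}$, while $j = n$ yields only the trivial representation. Only $j = n-1$ and $j = n-2$ need consideration. The first forces $a_n + a_{n-3} - a_{n-1}$ to equal some $a_i$ with $i \leq n-2$ (its critical subcase being the gap coincidence $a_n - a_{n-1} = a_{n-2} - a_{n-3}$); the second forces $a_n + a_{n-3} - a_{n-2}$ to equal some $a_i$ with $i \leq n-3$. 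Each obstruction is therefore a single explicit arithmetic relation between at most four recent Ulam numbers.

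Call an index $n$ \emph{good} when neither obstruction triggers, so that $a_{n+1} \leq a_n + a_{n-3}$. The crux is to bound the length of a run of consecutive \emph{bad} indices by some fixed constant $K$. Each bad index imposes a rigid linear relation on a small window of recent terms, and a long run would propagate these relations into a quasi-periodic gap pattern inconsistent with Ulam's definition -- for instance, by producing a \emph{third} uniquely representable sum strictly smaller than $a_n + a_{n-2}$, contradicting the assumed tightness of Eggleton's bound. Once $K$ is in hand, the worst-case growth rate is the largest real root of an associated polynomial inequality interpolating between $1.4656\dots$, the root of $x^3 - x^2 - 1 = 0$ (Eggleton alone), and $1.3803\dots$, the root of $x^4 - x^3 - 1 = 0$ (sharpened bound everywhere); I would then check that the chosen $K$ pushes the rate below $1.454$.

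The main obstacle I anticipate is the frequency argument: ruling out long runs of bad indices in the presence of two distinct obstruction types that may reinforce each other. This will likely require a careful case analysis of the short-range gap structure, perhaps supplemented by a direct numerical verification for small $n$ to handle initial segments. The final numerical optimization of $K$ against the target $1.454$ is, by comparison, routine.
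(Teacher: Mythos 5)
Your approach diverges from the paper's in a way that exposes a genuine gap. The paper does not use only two recursions (Type I, $a_n+a_{n-3}$, and Eggleton, $a_n+a_{n-2}$); it introduces a third, Type II, $a_{n-1}+a_{n-2}$, and proves (their Lemma 1) that $a_{n+1}$ is always bounded by $\max\{\text{Type I},\,\text{Type II}\}$ unless Eggleton is attained exactly. It then proves the single rigid combinatorial fact (their Lemma 2) that Eggleton cannot be attained twice consecutively, and converts the resulting constrained word problem over three $4\times 4$ companion matrices into a joint-spectral-radius computation, bounded via submultiplicativity of the operator norm over admissible words of length $15$ to get $1.4539\dots$. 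Your two-recursion scheme is a strict weakening: at your ``bad'' indices you fall back to the Eggleton bound, which is larger than both Type I and Type II, whereas the paper never needs to invoke Eggleton unless the Ulam sequence literally attains it.

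The decisive gap is precisely the step you flag as your ``main obstacle'': bounding runs of bad indices by a fixed $K$. Nothing in the paper proves, or even suggests, that the failure of unique representability of $a_n+a_{n-3}$ cannot persist over arbitrarily long stretches. The paper's Lemma~2 controls only consecutive attainments of Eggleton, and by their Lemma~1 ``Eggleton attained'' implies ``Type I is non-unique,'' but not conversely --- so your ``bad'' set is strictly larger than the set Lemma~2 controls, and the contradiction mechanism you sketch (``contradicting the assumed tightness of Eggleton's bound'') is unavailable: at a generic bad index there is no assumption that Eggleton is tight, so no contradiction follows. Absent any $K$, your interpolation between the roots of $x^3-x^2-1$ and $x^4-x^3-1$ has nothing to anchor it. There is also a small technical slip in your obstruction analysis: the case $j=n-2$ cannot occur, since $a_n+a_{n-3}-a_{n-2}>a_{n-3}\geq a_i$ for every $i\leq n-3$; the only genuine obstruction is $j=n-1$, i.e.\ $a_n+a_{n-3}=a_{n-1}+a_i$ for some $i\leq n-2$.
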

This small improvement requires a surprising amount of work; the main idea is that Eggleton's bound cannot be sharp twice in a row. This leads us to consider two near-Eggleton recursions and the potential order in which they could be applied; after some reductions, we are faced with a variation of the joint spectral radius problem with an additional restriction on allowed words; we circumvent the problem by using submultiplicativity of the operator norm to reduce it to a finite case distinction. Studying more cases would lead to a (slightly) improved constant, however, we also prove that our argument in its current form cannot prove $a_n \leq \rho^n$ for $\rho < 1.4146$ (which might in principle be achievable with our approach and more computation). We believe that there might be a limit to how much one can obtain via these types of arguments. It feels unlikely that one could prove subexponential growth this way. 
Numerical experiments suggest that the sequence grows \textit{linearly}. The massive gulf between what is known and what seems to be true is a \textit{very} unsatisfying state of affairs. We will prove that at least some small gaps exist.  

\begin{theorem} There exists $c>0$ such that for all $n \in \mathbb{N}_{\geq 2}$
$$ \min_{1 \leq k \leq n} \frac{a_{k+1}}{a_k} \leq 1 +  c \frac{\log{n}}{n}.$$
\end{theorem}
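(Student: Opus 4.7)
\emph{Proof plan.} The plan is a proof by contradiction. Suppose that $a_{k+1}/a_k > 1 + \delta$ for every $1 \le k \le n$, where $\delta = c \log n / n$ and $c$ is a large absolute constant to be chosen. The first observation is that the hypothesis forces a geometric lower bound $a_k \ge (1+\delta)^{k-1}$, so $a_n \gtrsim n^c$, and every gap $(a_k, a_{k+1})$ has length at least $\delta a_k$. Since by definition no integer in such a gap is an Ulam number, any integer in the gap that is expressible as $a_i + a_j$ with $i < j \le k$ must admit at least two such representations.

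The heart of the plan is then to harvest a large number of additive coincidences from these gaps. Fix $k$. For every smaller Ulam number $a_j$ satisfying $a_j < \delta a_k$, the sum $a_j + a_k$ lies inside the gap $(a_k, a_{k+1})$ and hence admits a second representation $a_p + a_q = a_j + a_k$ with $p < q < k$ and $\{p,q\} \ne \{j,k\}$. Using the exponential lower bound on the sequence, the number of admissible $j$ is at least
$$k - \left\lceil \frac{\log(1/\delta)}{\log(1+\delta)} \right\rceil \approx k - \frac{n}{c}.$$
Summing over $k$ in an appropriate tail range $[2n/c, n]$ produces at least $\Omega(n^2)$ additive quadruples $(j,k,p,q)$ with $a_j + a_k = a_p + a_q$ and the two pairs distinct.

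The contradiction is then closed by an opposing upper bound on this same count. The Ulam sequence is Sidon-like at its own points (each $a_m$, $m\ge 3$, has a unique representation as a sum of two smaller Ulam numbers), and a dyadic decomposition along the exponential scales dictated by $\delta$ converts this rigidity into an upper bound on the relevant additive energy. The comparison of these two bounds forces $c$ to remain of bounded size, which unpacks into $\min_{1 \le k \le n} a_{k+1}/a_k \le 1 + c \log n / n$. The principal obstacle is exactly the upper bound step: naive Cauchy--Schwarz or Pl\"unnecke--Ruzsa estimates fall short by a factor of order $n/\log n$, and the extra polylogarithmic savings has to be squeezed from the Ulam uniqueness property together with the dyadic structure forced by the assumed geometric growth.
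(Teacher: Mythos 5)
Your plan shares the paper's core idea---a large gap forces many additive coincidences, and a counting argument should rule this out---but it is incomplete, and the missing step is not a technicality. You sum over all gaps $(a_k,a_{k+1})$ in a tail range and harvest $\Omega(n^2)$ additive quadruples $a_j+a_k=a_p+a_q$ with distinct pairs. To close the argument you would then have to show that $\{a_1,\dots,a_n\}$ cannot host $\Omega(n^2)$ nontrivial additive quadruples, i.e.\ that its off-diagonal additive energy is $o(n^2)$, which amounts to saying the Ulam sequence is essentially a Sidon set. That is far stronger than the theorem being proved and is not known; the ``unique representation'' property only governs representations \emph{at Ulam points}, while your quadruples all represent non-Ulam integers sitting inside gaps, so it gives no handle on them. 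You recognize this yourself when you write that Cauchy--Schwarz and Pl\"unnecke--Ruzsa fall short by a factor of order $n/\log n$; the dyadic decomposition does not recover it.

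The paper avoids the obstruction by confining the count to a single short window rather than summing over many gaps. Fix $n$ and set $\delta$ so that $a_{k+1}\geq(1+\delta)a_k$ for all $k\leq n$. The $\sim n$ candidate sums $a_n+a_k$ with $a_k\leq \delta a_n/2$ all lie in the single interval $[a_n,\,a_n(1+\delta/2)]$, and each must be blocked by some pair $(j,m)$, $j<m<n$, whose sum lands in the same interval. The geometric gap hypothesis then directly controls the number of blockers: the larger summand $a_m$ must lie in $[a_n/2,a_n)$, leaving only $O(1/\delta)$ choices of $m=n-\ell$, and for each $\ell$ the smaller summand is trapped in a multiplicative interval of ratio $1+O(1/\ell)$, which contains $O(1/(\delta\ell))$ Ulam numbers. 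Summing over $\ell$ gives $O(\delta^{-1}\log\delta^{-1})$ blockers in total, against $n-O(\delta^{-1}\log\delta^{-1})$ candidates, which forces $\delta\lesssim \log n/n$ with no global additive-energy input at all. The key idea you are missing is precisely this localization to one interval: spreading the count over all gaps inflates the upper bound by the factor of $n$ that you then cannot remove.
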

The proof shows that $c=7$ would work for $n$ sufficiently large. The inequality appears to be close to sharp: if $a_k$ grows linearly (as is expected), then the result would be sharp up to the logarithmic factor. Needless to say, the result does not exclude exponential growth. It feels appropriate, this being the subject of his mathematical publications, to conclude with Queneau (from his \textit{Morale \'el\'ementaire})
\begin{quote}\textit{S'il levait le nez, il verrait au-del\`a des nuages les v\'erit\'es arithm\'etiques, mais ce n'est que la nuit qu'il sombre dans le vertige des \'etoiles.}
\end{quote}

\section{Proof of Theorem 1}
We start with a bit of terminology. We always have Eggleton's bound 
$$ a_{n+1} \leq a_n + a_{n-2} \qquad \qquad \qquad (E)$$
at our disposal. The entire idea behind the argument is that Eggleton's bound is a worst case scenario and we will try to avoid using it as much as possible.
The next two lexicographic terms  are $a_n + a_{n-3}$ (which we refer to as Type I) and $a_{n-1} + a_{n-2}$ (which we refer to as Type II). These are some of the largest sums that can be attained in the next step: Eggleton's bound being the largest and Type I and Type II being incomparable between each other. 

\begin{lemma}
    Let $1 \leq i < j \leq n$. If $a_i + a_j < a_n + a_{n-2}$, then 
    $$ a_{i} + a_j \leq \max\left\{ a_n + a_{n-3}, a_{n-1} + a_{n-2}\right\}$$
    with equality if and only if $(i,j) = (n-3, n)$ or $(i,j) = (n-2, n-1)$. If $a_{n+1} = a_n + a_{n-2}$, then Type I and Type II coincide, i.e. $ a_n + a_{n-3} = a_{n-1} + a_{n-2}$.  
\end{lemma}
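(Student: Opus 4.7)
The plan is to handle the first part via a direct case analysis on the larger index $j$ (using only strict monotonicity of the Ulam sequence) and then to deduce the second part from the first together with the defining property of $a_{n+1}$.

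For the first part, I would distinguish three cases. If $j = n$, the hypothesis $a_i + a_n < a_n + a_{n-2}$ forces $a_i < a_{n-2}$, hence $i \leq n-3$, so $a_i + a_j \leq a_n + a_{n-3}$ with equality iff $i = n-3$. If $j = n-1$, then $i \leq n-2$ gives $a_i + a_j \leq a_{n-2} + a_{n-1}$ with equality iff $i = n-2$. If $j \leq n-2$, then $a_i + a_j \leq a_{n-3} + a_{n-2} < a_{n-3} + a_n$, a strict inequality that sits below Type I. Combining the three cases yields the bound by $\max\{a_n + a_{n-3}, a_{n-1} + a_{n-2}\}$ and identifies the equality cases as exactly $(n-3, n)$ and $(n-2, n-1)$.

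For the second part, I would argue by contradiction. Suppose $a_{n+1} = a_n + a_{n-2}$ but $a_n + a_{n-3} \neq a_{n-1} + a_{n-2}$; after relabelling, assume without loss of generality that $S := a_n + a_{n-3}$ is the larger of these two. Then $a_n < S < a_n + a_{n-2} = a_{n+1}$. I would then show $S$ admits a unique representation as a sum of two distinct earlier Ulam terms: one such representation is given by $(n-3, n)$, and any other pair $(p,q)$ with $p < q \leq n$ and $a_p + a_q = S$ satisfies the hypothesis of the first part (since $S < a_n + a_{n-2}$), forcing $(p, q) \in \{(n-3, n), (n-2, n-1)\}$; the second option would yield $a_{n-1} + a_{n-2} = S = a_n + a_{n-3}$, contradicting the WLOG assumption. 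Hence $S$ is a uniquely representable sum strictly between $a_n$ and $a_{n+1}$, contradicting the minimality defining $a_{n+1}$.

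The only real subtlety---hardly a serious obstacle---is in the second part: one must carefully check that the alternative in the first part's equality clause (namely $(p, q) = (n-2, n-1)$) is ruled out by the WLOG assumption, so that the representation of $S$ is genuinely unique; beyond this, the argument is an elementary case analysis driven by strict monotonicity of the Ulam sequence and the defining minimality of $a_{n+1}$.
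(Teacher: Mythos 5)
Your proposal is correct and follows essentially the same route as the paper: the first part is the same case analysis on the larger index $j$, and the second part establishes that $a_n + a_{n-3}$ (or its counterpart) has a unique representation, contradicting the minimality of $a_{n+1}$. The only cosmetic difference is that you invoke the first part's equality dichotomy under a WLOG, whereas the paper redoes the short case analysis directly for $a_n + a_{n-3}$ without any WLOG; one small note is that the alternative pair $(n-2,n-1)$ is excluded by the assumption $a_n + a_{n-3} \neq a_{n-1} + a_{n-2}$ itself, not specifically by the ``larger'' WLOG, so the WLOG is harmless but not actually load-bearing.
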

\begin{proof}
    If $j=n$, then $a_i + a_j < a_n + a_{n-2}$ forces $i \leq n-3$ and the result follows. If $j = n-1$, then $i \leq n-2$ and the result follows. If $j\leq n-2$, then $i \leq n-3$ and the result follows.  If Eggleton's bound is attained, then Type I and Type II have to coincide: if they were different numbers, then Type I, $a_n + a_{n-3}$, would have a unique representation which can be seen as follows: suppose
    $$ a_n + a_{n-3} = a_i + a_j.$$
    If $j = n$, then this forces $i = n-3$ which is the same representation. If $j = n-1$, then $i = n-2$, which is Type II which is different by assumption. But if $j =n-2$, we are only left with $i=n-3$ which is too small. Therefore $a_n + a_{n-3}$ has a unique representation and therefore $a_{n+1} \leq a_n + a_{n-3}$. Then $a_{n+1} \leq a_n + a_{n-3} < a_n + a_{n-2}$ which is a contradiction.
\end{proof}

\begin{lemma}
    The Eggleton bound cannot be attained twice in a row.
\end{lemma}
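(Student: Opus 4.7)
I would argue by contradiction. Suppose Eggleton's bound is attained at two consecutive steps:
$$ a_{n+1} = a_n + a_{n-2} \quad \text{and} \quad a_{n+2} = a_{n+1} + a_{n-1}. $$
Applying Lemma 1 to the second equation (that is, Lemma 1 with its indices shifted up by one, so that the hypothesis ``$a_{n+1}=a_n+a_{n-2}$'' there becomes ``$a_{n+2}=a_{n+1}+a_{n-1}$'') forces Type I and Type II at this step to coincide:
$$ a_{n+1} + a_{n-2} = a_n + a_{n-1}. $$
Substituting the first Eggleton equation into the left-hand side yields $(a_n + a_{n-2}) + a_{n-2} = a_n + a_{n-1}$, which simplifies to the rigid arithmetic relation
$$ a_{n-1} = 2\,a_{n-2}. $$

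The plan is now to contradict this relation using a short uniqueness observation in the spirit of Lemma 1. Specifically, I would argue that $a_{n-2} + a_{n-3}$ is uniquely representable as a sum of two distinct earlier Ulam terms: if $a_{n-2} + a_{n-3} = a_i + a_j$ with $i < j \leq n-2$ and $(i,j) \neq (n-3,n-2)$, then $j \leq n-3$, so $a_j \leq a_{n-3}$ and hence $a_i = a_{n-2} + a_{n-3} - a_j \geq a_{n-2} > a_j$, violating $i<j$. Therefore $a_{n-1} \leq a_{n-2} + a_{n-3}$, and since $a_{n-3} < a_{n-2}$ we conclude $a_{n-1} < 2\,a_{n-2}$, in direct contradiction with the displayed equality.

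I do not foresee a serious obstacle in executing this plan; the only subtlety is the range of validity. The uniqueness step requires $n \geq 4$, which is consistent with the observation that the very start of the sequence genuinely exhibits two consecutive Eggleton equalities, namely $a_4 = a_3 + a_1$ and $a_5 = a_4 + a_2$. For all $n \geq 4$, however, the argument above rules out this behavior, and that is all that is needed for the downstream growth bound $a_n \leq 1.454^n$, which is itself an asymptotic statement.
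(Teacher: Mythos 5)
Your proof is correct and follows the paper's approach exactly: apply Lemma~1 to the second Eggleton step to force the rigid relation $a_{n-1} = 2a_{n-2}$, then contradict it with an upper bound on $a_{n-1}$. The only cosmetic difference is at the very end, where you re-derive the crude bound $a_{n-1} \leq a_{n-2} + a_{n-3}$ from a uniqueness argument, whereas the paper simply invokes Eggleton's already-established $a_{n-1} \leq a_{n-2} + a_{n-4}$; either yields the needed strict inequality $a_{n-1} < 2a_{n-2}$, and your explicit remark about the small-$n$ exception ($a_4 = a_3 + a_1$, $a_5 = a_4 + a_2$) is a sensible clarification the paper leaves implicit.
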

\begin{proof}
    
Suppose this is false: after two applications of Eggleton's bound, we have
$$ a_{n-2}, a_{n-1}, a_n, \quad a_n + a_{n-2}, \quad a_n + a_{n-1} + a_{n-2}.$$
Since Eggleton's bound is exact both times, we have from Lemma 1 that the Type I and Type II bound coincide each time. This means that, for the second application,
$$ a_{n-2} + (a_n + a_{n-2}) = a_n + a_{n-1}.$$
Then $2 a_{n-2} = a_{n-1}$ contradicting Eggleton bound's $a_{n-1} \leq a_{n-2} + a_{n-4}$.
\end{proof}

\begin{proof}[Proof of Theorem 1.]
We know that the Ulam sequence $a_1, a_2, \dots$ exists and construct a new sequence $b_1, b_2, \dots$ such that $a_n \leq b_n$. We set $b_n = a_n$ for $1 \leq n \leq 5$. After that, we construct $b_{n+1}$ as follows.  The Ulam element $a_{n+1}$ is either obtained via Eggleton ($a_{n+1} = a_{n} + a_{n-2}$), via Type I ($a_{n+1} = a_n + a_{n-3}$), via Type II ($a_{n+1} = a_{n-1} + a_{n-2}$) or via some other method $a_{n+1} = a_i + a_j$. If it is any of the first three, then we use the exact same recursion for $b_{n+1}$ which then guarantees $a_{n+1} \leq b_{n+1}$. If it is none of these three recursions, then we know (Lemma 1) that $a_{n+1} = a_i + a_j$ is either bounded by Type 1 or Type 2 and we use that corresponding recursion (if both Type I and Type II are upper bounds, then either choice is admissible) to define $b_{n+1}$. We also note that $b_n$ only uses Eggleton when $a_n$ uses Eggleton and this cannot happen twice in a row. The remaining question is: how quickly can such a sequence $(b_n)_{n=1}^{\infty}$ grow?
At this point, we start using the language of matrices. The language of matrices allows us to write an application of these bounds as 
$$\begin{pmatrix} b_{n+1} \\ b_n \\ b_{n-1} \\ b_{n-2}   \end{pmatrix} = \begin{pmatrix}
    1 & 0 & 0 & 1  \\
    1 & 0 & 0 & 0 \\
  0&  1 & 0 & 0  \\
   0&  0 & 1 & 0 
\end{pmatrix}  \begin{pmatrix} b_{n} \\ b_{n-1} \\ b_{n-2} \\ b_{n-3}  \end{pmatrix} \qquad (T_1 = \mbox{Type I)} $$
and
$$\begin{pmatrix} b_{n+1} \\ b_n \\ b_{n-1} \\ b_{n-2}  \end{pmatrix} = \begin{pmatrix}
    0 & 1 & 1 & 0  \\
    1 & 0 & 0 & 0 \\
  0&  1 & 0 & 0  \\
   0&  0 & 1 & 0  \\
\end{pmatrix}  \begin{pmatrix} b_{n} \\ b_{n-1} \\ b_{n-2} \\ b_{n-3} \end{pmatrix} \qquad (T_2 =\mbox{Type II)}$$
and
$$ \begin{pmatrix} b_{n+1} \\ b_n \\ b_{n-1} \\ b_{n-2}  \end{pmatrix} = \begin{pmatrix}
    1 & 0 & 1 & 0  \\
    1 & 0 & 0 & 0 \\
  0&  1 & 0 & 0  \\
   0&  0 & 1 & 0  
\end{pmatrix}  \begin{pmatrix} b_{n} \\ b_{n-1} \\ b_{n-2} \\ b_{n-3} \end{pmatrix} \qquad (T_3 =\mbox{Eggleton}).$$

We refer to these matrices as $T_1, T_2$ and $T_3$. The remaining problem is to understand the possible behavior of the growth of the operator norm of products of these forms, more precisely, we are interested in bounds along the lines of
$$ \left\|   \prod_{i=1}^{n} T_{k_i} \right\| \leq C^n \qquad \mbox{where}~k_i \in \left\{1,2,3\right\}.$$
This leads us naturally to the \textit{joint spectral problem} \cite{rota} with the notable difference that not all sequences $(k_i)$ are admissible (because $k_i = 3 = k_{i+1}$ is not admissible, Eggleton cannot arise twice in a row). The joint spectral problem is known to be difficult even without such constraints \cite{bl} and we do not aim to resolve it exactly and will instead focus on approximate solutions. Fixing the segment's length $L$, we can always use the submultiplicativity of the operator norm to bound, for $n \geq L$,
$$  \left\|   \prod_{i=1}^{n} T_{k_i} \right\|  \leq C_L \left(  \max_{\mbox{\tiny admissible word} \atop \mbox{\tiny of length L}} \quad \left\| \prod_{i=1}^{L} T_{k_i} \right\| \right)^{n/L},$$
    where the maximum ranges over all admissible words on $T_1, T_2, T_3$ of length $L$.
    Therefore, for all $L \in \mathbb{N}$,
    $$ C \leq \max_{\mbox{\tiny admissible word} \atop \mbox{\tiny of length L}} \quad \left\| \prod_{i=1}^{n} T_{k_i} \right\|^{1/L}.$$
Using this with $L=15$ and checking all admissible words gives 
$$ C \leq 1.4539\dots$$
with the extremal word being $ W =  (T_3 T_1)^3   (T_1 T_3)^3 T_2.$
One naturally wonders how far this is from the truth. Here we use the Gelfand formula \cite{gelfand} stating that the spectral radius is connected to the asymptotic behavior via
$$ \rho(A) = \lim_{n \rightarrow \infty} \|A^n\|^{1/n}.$$
For any admissible word $W = \prod_{i=1}^{L} T_{n_i}$ of length $L$ (whose concatenation with itself remains an admissible word)  we cannot hope to do better than $ \rho(W)^{1/L}$. The word
$W = T_3 T_1^{2}$ satisfies $\rho(W)^{1/3} \sim 1.4146\dots$ (the $1/3-$power of a root of a cubic polynomial).
\end{proof}
 
It is worth noting that $\rho(T_3 T_1)^{1/2} = \sqrt{2}$ is just a tiny bit smaller than $\rho(T_3 T_1^2)^{1/3} $. This leads to a number of near-minimizers when truncating after $L=14$ words, the extremal structure is difficult to discern at that stage. It is not inconceivable and maybe even plausible that the optimal solution is asymptotically given by powers of $T_3 T_1^2$ which would give the asymptotic rate $\rho(T_3 T_1^2)^{1/3} \sim 1.4146$.

\section{Proof of Theorem 2}

\begin{proof}
Let us fix $n$ and define $\delta$ by
$$  \min_{1 \leq k \leq n} \frac{a_{k+1}}{a_k} = 1 + \delta.$$
We know, since $a_2 = 2$ and $a_3 = 3$, that $\delta \leq 1/2$ once $n \geq 3$. As a first consequence, for all $1 \leq k \leq n$ we have that $a_{k+1} \geq (1+\delta) a_k$. Applying this iteratively, we see that, for $1 \leq \ell \leq n-1$,
$$ a_{n-\ell} \leq \frac{a_n}{(1+\delta)^{\ell}}.$$
This inequality implies 
$$ \# \left\{1 \leq k \leq n: a_k \geq \frac{a_n}{(1+\delta)^{\ell}} \right\} \leq \ell + 1$$
which, in turn, can be rewritten as, for all $1 \leq x \leq a_n$
 $$ \# \left\{1 \leq k \leq n: a_k \geq x \right\} \leq \frac{\log{(a_n/x)}}{\log{(1+\delta)}} + 1 \leq \frac{2}{\delta} \log{\left( \frac{a_n}{x} \right)} + 1,$$
 where the second inequality used $0 < \delta < 1$. There are many Ulam numbers smaller than $\delta a_n/2$ since
$$  \# \left\{1 \leq k \leq n: a_k \geq \frac{\delta a_n}{2} \right\} \leq \frac{2}{\delta} \log\left(\frac{2}{\delta}\right) + 1$$
and therefore
$$  \# \left\{1 \leq k \leq n: a_k \leq \frac{\delta a_n}{2} \right\} \geq n - \frac{2}{\delta} \log\left(\frac{2}{\delta}\right) - 1.$$
Thus, there are many small Ulam numbers that we could add to $a_n$ that yield a sum that is not much larger than $a_n$. Rewriting the previous bound
$$ \# \left\{ 1 \leq k < n:  a_n + a_k \in \left[ a_n, a_n + \frac{\delta}{2} a_n\right] \right\}  \geq n - \frac{2}{\delta} \log\left(\frac{2}{\delta}\right) - 1.$$
However, none of these numbers are in the sequence since $a_{n+1} \geq (1+\delta) a_n$ is much larger. The only way this is possible is if for each of these sums there exists an alternative representation $a_n + a_i = a_j + a_k$ for some $j < k \leq n=1$. We will argue that for $\delta$ large, the number of sums of the form $a_n + a_i$ with $a_i \leq a_n \delta/2$ (already bounded above) is larger than the number of (relevant) sums $a_j + a_k$: then, by pigeonholing, there must exist one `unblocked' sum $a_n + a_i$ which arises only once; this sum would then be an upper bound on $a_{n+1}$ which is a contradiction. This requires us to bound 
$$X = \# \left\{ (j,k) \in \mathbb{N}^2: j <k ~\mbox{and}~ a_n \leq  a_j + a_k \leq a_n + \frac{\delta}{2} a_n \right\}. $$
We may think of the sum $a_j + a_k$ as the sum of two terms where $a_k$ is `large' and $a_j$ is `small'. The `large' one has to be at least $\geq a_n/2$ and a `smaller' one that has to be in a precise regime. Using the estimate above
$$ \frac{a_n}{(1+\delta)^{\ell}} \geq a_{n-\ell} \geq \frac{a_n}{2} \qquad \mbox{one deduces} \qquad \ell \leq \frac{2 \log{2}}{\delta}.$$
Therefore
\begin{align*}
X &\leq \sum_{\ell=1}^{2 \log{(2)}/\delta} \# \left\{ 1 \leq k \leq n:  a_n - a_{n - \ell} \leq a_k \leq a_n - a_{n- \ell} + \frac{\delta}{2} a_n \right\}.
\end{align*}
By the $\delta-$gap assumption, we have for any interval $1 \leq a < b \leq a_n$ that
$$ \# \left\{ 1 \leq k \leq n: a \leq a_k \leq b\right\} \leq \frac{\log{(b/a)}}{\log(1+\delta)} + 1 \leq \frac{2}{\delta}\log{\left(\frac{b}{a}\right)} + 1.$$
Before combining these inequalities, we simplify the algebra. First,
\begin{align*}
   \frac{a_n - a_{n- \ell} + \frac{\delta}{2} a_n}{a_n - a_{n - \ell} } &= 1 + \frac{\delta}{2} \frac{a_n}{a_n - a_{n-\ell}}.
\end{align*}
Using the inequality from above,
\begin{align*}
\frac{a_n}{a_n - a_{n-\ell}} &\leq \frac{a_n}{a_n - \frac{a_n}{(1+\delta)^{\ell}}} =  \frac{1}{1 - \frac{1}{(1+\delta)^{\ell}}} = \frac{(1+\delta)^{\ell}}{(1+\delta)^{\ell} - 1}.
\end{align*}
Since $\ell \leq 2 \log{(2)} /\delta$, we have
$$ (1 + \delta)^{\frac{2 \log{2}}{\delta}} \leq \left[ (1+\delta)^{\frac{1}{\delta}} \right]^{2 \log{2}} \leq e^{2 \log{(2)}} =4.$$
We also have Bernoulli's inequality $(1+\delta)^{\ell} \geq 1 + \ell \delta$ which implies
$$ \frac{a_n}{a_n - a_{n-\ell}} \leq \frac{(1+\delta)^{\ell}}{(1+\delta)^{\ell} - 1} \leq \frac{4}{\delta \ell}$$
and thus
$$ \frac{2}{\delta} \log\left( 1 + \frac{\delta}{2} \frac{4}{\delta \ell}  \right) \leq \frac{2}{\delta} \log\left( 1 + \frac{2}{\ell} \right) \leq \frac{4}{\delta \ell}.$$
Collecting all these bounds, we get
\begin{align*}
X &\leq \sum_{\ell=1}^{2 \log{(2)}/\delta} \# \left\{ 1 \leq k \leq n:  a_n - a_{n - \ell} \leq a_k \leq a_n - a_{n- \ell} + \frac{\delta}{2} a_n \right\} \\
&\leq \sum_{\ell=1}^{2 \log{(2)}/\delta} \left( 1 + \frac{4}{\delta \ell}\right) = \frac{2\log{(2)}}{\delta} + \frac{4}{\delta}  \sum_{\ell=1}^{2 \log{(2)}/\delta} \frac{1}{\ell} \leq \frac{2\log{(2)}}{\delta} + \frac{4}{\delta} \log\left( \frac{10}{\delta}\right).
\end{align*} 
We require
$$  \# \left\{ 1 \leq k < n:  a_n + a_k \in \left[ a_n, a_n + \frac{\delta}{2} a_n\right] \right\} \leq X$$
and with the lower bound on the first set derived above and the upper bound on $X$ just derived
$$   n - \frac{2}{\delta} \log\left(\frac{2}{\delta}\right) - 1 \leq X \leq \frac{2\log{(2)}}{\delta} + \frac{4}{\delta} \log\left( \frac{10}{\delta}\right).$$
As $n$ becomes large, this forces
$$ \delta \leq c\frac{\log{n}}{n}$$
and a short computation shows that $c = 7$ is admissible for $n$ sufficiently large.
\end{proof}

  The proof both suggests a certain type of strategy and also the difficulty faced by that strategy. If the sequence were to grow quickly, say exponentially, then $a_k \ll a_n$ is true for `almost all' $1 \leq k \leq n$. This means that there are many `candidate sums' $a_n + a_k$ that should potentially be in the sequence and that are not much larger than $a_n$ (leading to slow growth). Moreover, there should be few $a_i + a_j$ sums in the same range and they should not be able to `block' all the $a_n + a_k$. The main obstruction appears to be the one shown in Figure 2: the scenario where elements of the sequence arise in groups that are clumped together followed by a huge gap to the next element. It is not clear to us how to exclude this scenario.

\begin{center}
    \begin{figure}[h!]
        \centering
\begin{tikzpicture}
    \draw [ultra thick] (0,0) -- (8,0);
    \draw [ultra thick] (4, -0.1) -- (4,0.1);
        \draw [ultra thick] (4.2, -0.1) -- (4.2,0.1);
    \draw [ultra thick] (3.9, -0.1) -- (3.9,0.1);
    \draw [ultra thick] (4.15, -0.1) -- (4.15,0.1);
    \draw [ultra thick] (4.8, -0.1) -- (4.8,0.1);
    \draw [ultra thick] (4.82, -0.1) -- (4.82,0.1);
    \draw [ultra thick] (4.93, -0.1) -- (4.93,0.1);
    \draw [ultra thick] (5.04, -0.1) -- (5.04,0.1);
    \draw [ultra thick] (7.1, -0.1) -- (7.1,0.1);
    \draw [ultra thick] (7.23, -0.1) -- (7.23,0.1);
    \draw [ultra thick] (7.15, -0.1) -- (7.15,0.1);
    \draw [ultra thick] (1.4, -0.1) -- (1.4,0.1);
    \draw [ultra thick] (1.45, -0.1) -- (1.45,0.1);
    \draw [ultra thick] (1.6, -0.1) -- (1.6,0.1);
\end{tikzpicture}       
        \label{fig:}
        \caption{`Clumps' followed by a big jump.}
    \end{figure}
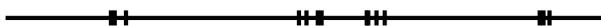
\end{center}


\begin{thebibliography}{1}

\bibitem{adut} P. Adutwum, C. Hopper, E. Ro and A. Tougouma, Distributions of Ulam Words up to Length 30, arXiv preprint arXiv:2410.01217 (2024).

\bibitem{angelo} R. Angelo, A hidden signal in Hofstadter's $ H $ sequence. arXiv preprint arXiv:2206.00750.

\bibitem{bade} T. Bade, K., Cui, A. Labelle and D. Li,  Ulam Sets in New Settings, Integers (20), 2020.

\bibitem{bl} Vincent D. Blondel, John N. Tsitsiklis, The boundedness of all products of a pair of matrices is undecidable, Systems and Control Letters, 41:2, pp. 135–140, 2000.

\bibitem{cas} J. Cassaigne and S. Finch, 
A class of 1-additive sequences and quadratic recurrences.
Experiment. Math. 4 (1995), no. 1, 49-60. 

\bibitem{finch1} S. Finch, Conjectures about s-additive sequences. Fibonacci Quart. 29 (1991), no. 3, 209-214. 

\bibitem{finch3} S. Finch, Patterns in 1-additive sequences.
Experiment. Math. 1 (1992), no. 1, 57-63

\bibitem{finch2} S. Finch, On the regularity of certain 1-additive sequences. J. Combin. Theory Ser. A 60 (1992), no. 1, 123-130. 

\bibitem{finch0} S. Finch, Are 0-Additive Sequences Always Regular?
The American Mathematical Monthly, vol. 99 (1992), p. 671-673

\bibitem{gelfand} I. Gelfand, Normierte ringe, Rech. Math. [Mat. Sbornik] N.S. 9 (51) (1941) 3--24

\bibitem{hinman} J. Hinman, B. Kuca, A. Schlesinger  \&  A. Sheydvasser, The unreasonable rigidity of Ulam sequences. Journal of Number Theory, 194, 409-425.

\bibitem{hinman2} J. Hinman, B. Kuca, A. Schlesinger  \&  A. Sheydvasser,. Rigidity of Ulam sets and sequences. Involve, a Journal of Mathematics, 12(3), 521-539.

\bibitem{kravitz} N. Kravitz and S. Steinerberger, Ulam sequences and Ulam sets, Integers 18 (2018)

\bibitem{kuca} B. Kuca, Structures in additive sequences, Acta Arith. 186.3 (2018), 273--300

\bibitem{mandel} A. Mandelshtam, On fractal patterns in Ulam words. arXiv preprint arXiv:2211.14229.

\bibitem{mauldin} R. D. Mauldin and S. M. Ulam, Mathematical problems and games. Advances in Applied Mathematics, 8 (1987), p. 281-344.

\bibitem{q0} R. Queneau, Sur les suites s-additives. C. R. Acad. Sci. Paris S\'er. A-B   266 (1968), A957–A958.

\bibitem{q} R. Queneau, Sur les suites s-additives, J. Combinatorial Theory Ser. A 12 (1972), 31-71. 

\bibitem{recaman} B. Recaman, Research Problems: Questions on a Sequence of Ulam. 
Amer. Math. Monthly 80 (1973), 919-920. 

\bibitem{rota} G. C. Rota and G. Strang, A note on the joint spectral radius, Proceedings of the Netherlands Academy, 22:379–381, 1960.

\bibitem{schm} J. Schmerl and E. Spiegel, The regularity of some 1-additive sequences. 
J. Combin. Theory Ser. A 66 (1994), no. 1, 172-175. 

\bibitem{sen} A. Sheydvasser,  The Ulam Sequence of Linear Integer Polynomials. Journal of Integer Sequences, 24 (2021), 3.

\bibitem{stein} S. Steinerberger, A hidden signal in the Ulam sequence. Experimental Mathematics, 26 (2017), 460-467.

\bibitem{ulam0} S. Ulam, Ulam, On some mathematical problems connected with patterns of growth of figures. In Proceedings of symposia in applied mathematics (Vol. 14, No. 14, pp. 215-224). Providence: Am. Math. Soc. Vol. 14, 1962.

\bibitem{ulam} S. Ulam, Combinatorial analysis in infinite sets and some physical theories.  SIAM Rev. 6 1964 343-355. 

\bibitem{ulam2} S. M. Ulam, Problems in Modern Mathematics. Science Editions John Wiley \& Sons, Inc., New York 1964

\end{thebibliography}
\end{document}